\newtheorem{theorem}{Theorem}
\newtheorem{corollary}[theorem]{Corollary}
\newtheorem{lemma}[theorem]{Lemma}
\newtheorem{definition}[theorem]{Definition}
\author{Gord Sinnamon}
\address{Department of Mathematics,
University of Western Ontario,
London, Canada}
\email{sinnamon@uwo.ca}
\thanks{Supported by the Natural Sciences and Engineering Research Council of Canada}
\keywords{Ces\`aro, Hardy, operator norm}
\subjclass[2020]{Primary 26D15, Secondary 47B37.}
\begin{document}

\title{Norm of the discrete Ces\`aro operator minus identity}

\begin{abstract} The norm of $C-I$ on $\ell^p$, where $C$ is the Ces\`aro operator, is shown to be $1/(p-1)$ when $1<p\le2$. This verifies a recent conjecture of G. J. O. Jameson.  The norm of $C-I$ on $\ell^p$ is also determined when $2< p<\infty$. The two parts together answer a question raised by G. Bennett in 1996. Operator norms in the continuous case, Hardy's averaging operator minus identity, are already known. Norms in the discrete and continuous cases coincide.
\end{abstract}

\maketitle


The Ces\`aro operator, $C$, maps a sequence $(x_n)$ to $(y_n)$, where 
\[
y_n=\frac1n\sum_{k=1}^nx_k.
\]
Hilbert space methods, see \cite{BHS}, show that the operator norm of $C-I$, as a map on $\ell^2$, is 1. The question of determining the exact norm of $C-I$ as a map on $\ell^p$ was posed in 1996 by Grahame Bennett as Problem 10.5 in \cite{GB}. Recently, Jameson \cite{J} answered the question in the case $p=4/3$ by showing that $\|C-I\|_{\ell^{4/3}}=3$.  He conjectured that $\|C-I\|_{\ell^p}=1/(p-1)$ for $1<p\le2$. In Theorems \ref{theorem} and \ref{extn1}, Jameson's method for the case $p=4/3$ is extended and used to verify his conjecture and to answer Bennett's question for the index range $1<p\le2$.

Jameson also gave the upper bound $\|C-I\|_{\ell^4}\le3^{1/4}$ for the operator norm of $C-I$ when $p=4$. In Theorems \ref{upper} and \ref{extn2} we extend the bound to all $p>2$ and show it is best possible. This completes the following answer to Bennett's question: If $1<p\le\infty$, then
\[
\|C-I\|_{\ell^p}=\begin{cases}1/(p-1),&1<p\le 2;\cr m_p^{-1/p},&2<p<\infty;\cr2,&p=\infty.\end{cases}
\]
Here $m_p$ is the minimum value taken by the function $pt^{p-1}+(1-t)^p-t^p$ on the interval $[0,\frac12]$. See Definition \ref{mp} and Lemma \ref{lemma2} below. The minimum value $m_p$ is easy to compute when $p=3$ and $p=4$; the former gives $\|C-I\|_{\ell^3}=(2-\sqrt2)^{-1/3}$ and the latter recovers Jameson's upper bound. The result $\|C-I\|_{\ell^\infty}=2$ appears in \cite{J}*{Proposition 1}.

Hardy's averaging operator takes a function $x$ on $(0,\infty)$ to $Px(s)=\frac1s\int_0^sx$. Many results for $P-I$ were first obtained from work on the Beurling-Ahlfors transform on radial functions. For background and references, see \cite{S17}. The operator $P-I$ has been studied as a map on $L^p=L^p(0,\infty)$, on the positive cone of $L^p$, and on the cone of positive, decreasing functions on $L^p$. Results for weighted $L^p$, see \cite{BS11}, and weak-$L^p$, see \cite{BS19}, are also known. For background and additional references, see \cite{S20}. Special cases of these general results reveal that the values for $\|C-I\|_{\ell^p}$, stated above, coincide with those for $\|P-I\|_{L^p}$.

In corollaries following the main theorems below, we imitate the proofs for $C-I$ to give analogous proofs for $P-I$. These quick, elementary proofs recover the known values of $\|P-I\|_{L^p}$ in the case $1<p<2$ (see \cite[Theorem 4.1]{BJ}, \cite[Theorem 5.3]{BO}, and \cite[(3.13)]{V}) and the case $2<p$ (see \cite[(1.2)]{S17}).

In what follows we consider only real-valued sequences and functions, but, as is well known, extending a linear operator from real to complex values does not change its $\ell^p$ or $L^p$ operator norm.

In several of the arguments below, it is important that the power functions involved be defined on all of $\mathbb R$. Let
\[
\mathbb E=\big\{\tfrac{2i}{2j+1}:i,j\in\mathbb Z\big\}\cap(2,\infty),
\]
a dense subset of $[2,\infty)$. If $p\in\mathbb E$, the function $t\to t^p$ is twice continuously differentiable on $\mathbb R$, it is non-negative, its derivative is strictly increasing and the Mean Value Theorem implies that for all $a,b\in\mathbb R$, 
\begin{equation}\label{mvt}
pa^{p-1}(b-a)\le b^p-a^p\le pb^{p-1}(b-a).
\end{equation}
To investigate $C-I$ in the case $1<p<2$ it is convenient to work instead with the transpose Ces\`aro operator, $C^T$, in the case $2<p<\infty$. The transpose maps a sequence $(x_n)\in\ell^p$ to the sequence $(y_n)$, where 
\[
y_n=\sum_{k=n}^\infty\frac{x_k}k.
\]
Since $\|C-I\|_{\ell^p}=\|C^T-I\|_{\ell^{p'}}$, the conjecture $\|C-I\|_p=p'-1$ for $1<p<2$ may be equivalently stated as $\|C^T-I\|_{\ell^p}=p-1$ for $p\ge2$. (Here and throughout, $1/p+1/p'=1$).

\begin{lemma}\label{lemma1} Let $p\in\mathbb E$. For all $t\in\mathbb R$,
\[
(p-1)p^{p-2}t^p+(t+1)^p-p(t+1)^{p-1}t\ge p^{p-2}(p-1)^{1-p}.
\]
\end{lemma}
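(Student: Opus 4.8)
The plan is to treat the claim as a one–variable minimization. Write
\[
f(t)=(p-1)p^{p-2}t^p+(t+1)^p-p(t+1)^{p-1}t,
\]
and aim to show that $\min_{t\in\mathbb R}f(t)=p^{p-2}(p-1)^{1-p}$. Because $p=2i/(2j+1)\in\mathbb E$ has even numerator, the exponents behave well on all of $\mathbb R$: one has $t^p=|t|^p$ and $t^{p-2}=|t|^{p-2}$, both non-negative, while $t^{p-1}$ is an odd function, so $f$ is continuously differentiable on $\mathbb R$. First I would settle the behaviour at infinity. Collecting top-degree terms, $f(t)\sim\big((p-1)p^{p-2}+1-p\big)|t|^p$ as $t\to\pm\infty$, and the coefficient is positive precisely because $p^{p-2}>1$ for $p>2$. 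Hence $f\to+\infty$ at both ends and its global minimum is attained at an interior critical point.

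Next I would differentiate and factor, obtaining
\[
f'(t)=p(p-1)\,t\big(p^{p-2}t^{p-2}-(t+1)^{p-2}\big).
\]
The critical points are $t=0$ together with the solutions of $p^{p-2}t^{p-2}=(t+1)^{p-2}$. Using $s^{p-2}=|s|^{p-2}$ and the strict monotonicity of $s\mapsto s^{p-2}$ on $[0,\infty)$, this last equation is equivalent to $|pt|=|t+1|$, whose roots are $t=1/(p-1)$ and $t=-1/(p+1)$. A sign analysis of the two factors of $f'$ then shows that $f$ decreases, increases, decreases, increases across the points $-1/(p+1)<0<1/(p-1)$, so $f$ has local minima at $t=1/(p-1)$ and $t=-1/(p+1)$ and a local maximum at $t=0$. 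Thus the global minimum is the smaller of $f(1/(p-1))$ and $f(-1/(p+1))$.

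A direct evaluation at $t_0=1/(p-1)$, where $t_0+1=p/(p-1)$, causes the last two terms to cancel and leaves $f(t_0)=(p-1)p^{p-2}t_0^p=p^{p-2}(p-1)^{1-p}$, exactly the asserted bound. It therefore remains to verify that $f(-1/(p+1))\ge f(1/(p-1))$. Evaluating the left side and clearing the common factors $(p+1)^{-p}$ and $p^{p-2}$, this inequality reduces, after setting $q=p-1\ge1$, to the elementary claim $(1+2/q)^q\le 2q+1$. I expect this reduction to be the main obstacle, since the crude bound $(1+2/q)^q<e^2$ only settles large $q$ and leaves a genuine comparison near $q=1$. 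To prove it for all $q\ge1$ I would put $\psi(q)=\ln(2q+1)-q\ln(1+2/q)$, observe that $\psi(1)=0$, and show $\psi'\ge0$. The key estimate is the convexity of $s\mapsto1/s$, which via the trapezoidal inequality gives $\ln(1+2/q)=\int_q^{q+2}\frac{ds}{s}\le\frac1q+\frac1{q+2}$; substituting this into $\psi'(q)=\frac{2}{2q+1}+\frac{2}{q+2}-\ln(1+2/q)$ leaves a rational expression whose numerator is proportional to $q^2-1\ge0$. Hence $\psi$ is non-decreasing on $[1,\infty)$ and so $\psi\ge0$, which yields the required comparison and completes the proof.
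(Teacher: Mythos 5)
Your proof is correct, and its skeleton matches the paper's: both treat the claim as a global minimization of $f(t)=(p-1)p^{p-2}t^p+(t+1)^p-p(t+1)^{p-1}t$, compute the same derivative $f'(t)=p(p-1)t\big((pt)^{p-2}-(t+1)^{p-2}\big)$, and find the same critical points $0$, $1/(p-1)$, $-1/(p+1)$. You diverge in two places, both to your advantage. First, the paper simply observes that the minimum is the smallest of the three critical values, so it must also rule out the value $f(0)=1$; it does this by showing $x^{x-2}(x-1)^{1-x}$ is decreasing for $x>2$. Your sign analysis of $f'$ (decrease--increase--decrease--increase) shows $t=0$ is a local maximum, so that comparison never arises. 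Second, and more substantively, the remaining comparison $f(-1/(p+1))\ge f(1/(p-1))$, equivalently $(2p-1)\ge\big(\tfrac{p+1}{p-1}\big)^{p-1}$, is handled quite differently: the paper uses three tangent-line bounds on the logarithm (at $1$, $2$, $3$) and a three-case piecewise argument on $[2,3]$, $[3,5]$, $[5,\infty)$, checking concavity and endpoint signs; you instead set $q=p-1$, note $\psi(q)=\ln(2q+1)-q\ln(1+2/q)$ vanishes at $q=1$, and show $\psi'\ge0$ via the trapezoidal (Hermite--Hadamard) bound $\ln(1+2/q)\le\frac1q+\frac1{q+2}$, which reduces the sign of $\psi'$ to that of $q^2-1$. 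I verified the algebra: the numerator is exactly $2(q^2-1)$, and the evaluation $f(-1/(p+1))=p^{p-2}(2p-1)(p+1)^{1-p}$ matches the paper's. Your route buys a single uniform argument with no case splitting, at the cost of nothing; the paper's tangent-line method is more ad hoc but requires only the elementary fact that $\ln$ lies below its tangents.
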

\begin{proof} Since $p>2$, $p^{p-2}>1$ so the left-hand side of the inequality goes to infinity as $|t|$ does. Its derivative is $p(p-1)t((pt)^{p-2}-(t+1)^{p-2})$, which vanishes if and only if $t=0$ or $t+1=\pm pt$. This derivative goes from positive to negative as $t$ crosses $0$ so there is a local maximum at $t=0$. The other critical points are $t=1/(p-1)$ and $t=-1/(p+1)$, so the minimum value taken by the left-hand side is the smaller of,
\[
p^{p-2}(p-1)^{1-p}\quad\mbox{and}\quad p^{p-2}(2p-1)(p+1)^{1-p}.
\]
To complete the proof we show that the first of these is smaller. This is equivalent to showing that $h(p)=(p-1)(\log(p+1)-\log(p-1))-\log(2p-1)\le0$. Since $h(2)=0$ it suffices to show that $h'(p)\le0$  for $p>2$. Since $\frac1s$ lies below its secant on $[p-1,p+1]$, we have $\frac1s\le\frac{2p-s}{p^2-1}$. Thus $\log(p+1)-\log(p-1)\le\int_{p-1}^{p+1}\frac{2p-s}{p^2-1}\,ds=\frac{2p}{p^2-1}$. Also, $2p-1\le p^2-1$ so
\[
h'(p)\le\frac{2p}{p^2-1}+(p-1)\Big(\frac1{p+1}-\frac1{p-1}\Big)-\frac2{p^2-1}=0.\qedhere
\]
%
%
%
%
%
\end{proof}

Now we are ready to show that $\|C^T-I\|_{\ell^p}\le p-1$ for $p\in\mathbb E$. The proof follows the method of \cite{J}*{Theorem 2}.

\begin{theorem}\label{theorem} Let $p\in\mathbb E$, let $(x_n)\in\ell^p$ be a real sequence, and set $y_n=\sum_{k=n}^\infty \frac{x_k}k$. Then
\[
\sum_{n=1}^\infty (y_n-x_n)^p\le(p-1)^p\sum_{n=1}^\infty x_n^p.
\]
\end{theorem}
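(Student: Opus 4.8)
The plan is to establish the stronger per-term inequality
\[
z_n^p\le(p-1)^px_n^p+p^{2-p}(p-1)^{p-1}\big(y_n^p-py_n^{p-1}x_n\big),
\]
where $z_n=y_n-x_n$, and then sum over $n$ and let the remainder telescope. The first and decisive step is to recognize that this inequality is exactly Lemma \ref{lemma1} in disguise. I would apply the Lemma at $t=x_n/z_n$, noting that then $t+1=(x_n+z_n)/z_n=y_n/z_n$, and multiply the resulting inequality through by $z_n^p$. Because the power functions are defined on all of $\mathbb R$ and scale correctly (as set up before the Lemma), this turns $(p-1)p^{p-2}t^p$ into $(p-1)p^{p-2}x_n^p$, turns $(t+1)^p$ into $y_n^p$, and turns $p(t+1)^{p-1}t$ into $py_n^{p-1}x_n$, while the right-hand side $p^{p-2}(p-1)^{1-p}$ becomes $p^{p-2}(p-1)^{1-p}z_n^p$. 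Dividing by $p^{p-2}(p-1)^{1-p}$ (equivalently multiplying by $p^{2-p}(p-1)^{p-1}$) and using $p^{2-p}(p-1)^{p-1}\cdot(p-1)p^{p-2}=(p-1)^p$ produces the displayed bound. The finitely many indices with $z_n=0$, where $y_n=x_n$, are handled by a direct check, since there the right-hand side equals $(p-1)^px_n^p(1-p^{2-p})\ge0$.

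Next I would control the remainder $y_n^p-py_n^{p-1}x_n$ using the identity $x_n=n(y_n-y_{n+1})$ together with the right-hand half of the Mean Value inequality \eqref{mvt}. Taking $a=y_{n+1}$ and $b=y_n$ gives $y_n^p-y_{n+1}^p\le py_n^{p-1}(y_n-y_{n+1})$, hence $py_n^{p-1}x_n=pny_n^{p-1}(y_n-y_{n+1})\ge n(y_n^p-y_{n+1}^p)$, and therefore
\[
y_n^p-py_n^{p-1}x_n\le ny_{n+1}^p-(n-1)y_n^p.
\]
The point of this form is that its right-hand side is a forward difference of $(n-1)y_n^p$, so summing $n=1,\dots,N$ collapses to $Ny_{N+1}^p$.

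Combining the two displays (the coefficient $p^{2-p}(p-1)^{p-1}$ is positive) and summing yields
\[
\sum_{n=1}^N z_n^p\le(p-1)^p\sum_{n=1}^N x_n^p+p^{2-p}(p-1)^{p-1}Ny_{N+1}^p,
\]
so the whole argument reduces to checking that $Ny_{N+1}^p\to0$. Since $|y_n|\le\sum_{k\ge n}|x_k|/k=:\tilde y_n$, which is decreasing and lies in $\ell^p$ by boundedness of $C^T$ on $\ell^p$ (Hardy's inequality applied to $|x_n|$), the routine fact that a decreasing nonnegative $\ell^p$ sequence $c_n$ satisfies $nc_n^p\to0$ gives $Ny_{N+1}^p\le N\tilde y_{N+1}^p\to0$. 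Letting $N\to\infty$ finishes the proof; the monotone partial sums $\sum_{n=1}^N z_n^p$ are bounded, so they converge.

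I expect the main obstacle to be purely one of recognition: finding the substitution $t=x_n/z_n$ that converts the somewhat opaque Lemma \ref{lemma1} into the sharp per-term inequality, and simultaneously anticipating that the Mean Value inequality \eqref{mvt} will reduce the leftover term $y_n^p-py_n^{p-1}x_n$ to a perfect telescoping difference. Once these two choices are in place the remaining work—the $z_n=0$ check, the telescoping, and the decay $Ny_{N+1}^p\to0$—is routine.
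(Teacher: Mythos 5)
Your proposal is correct and takes essentially the same route as the paper's own proof: the identical substitution $t=x_n/z_n$ into Lemma \ref{lemma1} (your per-term inequality is the paper's inequality $y_n^p-py_n^{p-1}x_n\ge c(z_n^p-(p-1)^px_n^p)$ divided through by $c=p^{p-2}(p-1)^{1-p}$), the identical use of \eqref{mvt} to convert $y_n^p-py_n^{p-1}x_n$ into the telescoping difference $ny_{n+1}^p-(n-1)y_n^p$, and the same summation and limit. The only immaterial differences are that the paper establishes $Ny_{N+1}^p\to0$ by a direct H\"older estimate on the tail rather than via Hardy's inequality and the decreasing-$\ell^p$-sequence fact, and your phrase ``finitely many indices with $z_n=0$'' is an unnecessary (and in general false) claim, though harmless since your per-index check covers every such $n$.
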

\begin{proof} By H\"older's inequality, the sum defining $y_N$ converges absolutely and 
\[
|y_N|\le\sum_{n=N}^\infty\frac{|x_n|}n\le\Big(\sum_{n=N}^\infty x_n^p\Big)^{1/p}\Big(\sum_{n=N}^\infty n^{-p'}\Big)^{1/p'}\sim\Big(\sum_{n=N}^\infty x_n^p\Big)^{1/p}N^{-1/p}.
\]
Therefore, as $N\to\infty$,
\[
\sum_{n=1}^N \big(ny_{n+1}^p-(n-1)y_n^p\big) =Ny_{N+1}^p\to0.
\]
Note that for all $n$, $x_n=n(y_n-y_{n+1})$. For each $n$, (\ref{mvt}) implies
\[
ny_{n+1}^p-(n-1)y_n^p=y_n^p-n(y_n^p-y_{n+1}^p)
\ge y_n^p-py_n^{p-1}n(y_n-y_{n+1})=y_n^p-py_n^{p-1}x_n.
\]

Now let $z_n=y_n-x_n$ and set $c=p^{p-2}(p-1)^{1-p}$. If $z_n\ne0$ we may let $t=x_n/z_n$ and apply Lemma \ref{lemma1} in the form $(t+1)^p-p(t+1)^{p-1}t\ge c(1-(p-1)^pt^p)$ to get
\[
y_n^p-py_n^{p-1}x_n=z_n^p\big((1+t)^p-p(1+t)^{p-1}t\big)
\ge cz_n^p\big(1-(p-1)^pt^p\big)
=c(z_n^p-(p-1)^px_n^p).
\]
 If $z_n=0$,  then $y_n=x_n$ and the same inequality holds, as
\[
y_n^p-py_n^{p-1}x_n=-(p-1)x_n^p\ge-p^{p-2}(p-1)x_n^p=c(z_n^p-(p-1)^px_n^p).
\]
Summing over $n$, we have
\[
c\sum_{n=1}^N \big(z_n^p-(p-1)^px_n^p\big)\le \sum_{n=1}^N \big(ny_{n+1}^p-(n-1)y_n^p\big) \to0
\]
as $N\to\infty$ and we conclude that 
\[
\sum_{n=1}^\infty z_n^p\le(p-1)^p\sum_{n=1}^\infty x_n^p.\qedhere
\]
\end{proof}

The proof simplifies in the continuous case, where instead of the transpose Ces\`aro operator we work with the dual averaging operator $P^T$. 
\begin{corollary} Let $p\in\mathbb E$, let $x\in L^p$, and set $y(s)=P^Tx(s)=\int_s^\infty x(\theta)\,\frac{d\theta}\theta$. Then $\int_0^\infty (y-x)^p\le (p-1)^p\int_0^\infty x^p$.
\end{corollary}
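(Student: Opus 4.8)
\emph{Proof proposal.} The plan is to mirror the proof of Theorem \ref{theorem}, replacing its telescoping sum and its appeal to the Mean Value Theorem by an exact differential identity; this is precisely where the continuous case simplifies. First I would record the pointwise relation between $x$ and $y$. Differentiating $y(s)=\int_s^\infty x(\theta)\,\frac{d\theta}\theta$ gives $y'(s)=-x(s)/s$, so $x(s)=-sy'(s)$. Consequently, wherever $y$ is differentiable,
\[
\frac{d}{ds}\big(sy(s)^p\big)=y^p+psy^{p-1}y'=y^p-py^{p-1}x.
\]
This is the exact continuous counterpart of the quantity $ny_{n+1}^p-(n-1)y_n^p$ appearing in the discrete proof, but here it is an equality rather than the inequality produced by (\ref{mvt}).

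Next I would apply Lemma \ref{lemma1} in exactly the same way as in Theorem \ref{theorem}. Writing $z=y-x$ and $c=p^{p-2}(p-1)^{1-p}$, the lemma in the form $(t+1)^p-p(t+1)^{p-1}t\ge c(1-(p-1)^pt^p)$ (applied with $t=x/z$ where $z\ne0$, and by the same direct substitution as in the theorem where $z=0$) yields the pointwise bound
\[
y^p-py^{p-1}x\ge c\big(z^p-(p-1)^px^p\big).
\]
Combined with the identity above, this gives $\frac{d}{ds}(sy^p)\ge c\big(z^p-(p-1)^px^p\big)$ almost everywhere.

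I would then integrate over a compact interval $[a,b]\subset(0,\infty)$. On such an interval $x/\theta$ is integrable, so $y$ is absolutely continuous and, since $t\mapsto t^p$ is $C^1$, so is $sy^p$; the Fundamental Theorem of Calculus therefore applies and produces
\[
c\int_a^b\big(z^p-(p-1)^px^p\big)\,ds\le by(b)^p-ay(a)^p.
\]
The step requiring the most care is the behaviour of the two boundary terms, and this is the only genuine obstacle. The term at infinity is controlled exactly as in Theorem \ref{theorem}: H\"older's inequality gives $sy(s)^p\le(p'-1)^{1-p}\int_s^\infty x^p$, which tends to $0$ as $s\to\infty$. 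The term at $0$ needs no estimate whatsoever: because $p\in\mathbb E$ makes $t\mapsto t^p$ non-negative, we have $ay(a)^p\ge0$, so this term may simply be discarded. Letting $b\to\infty$ for fixed $a>0$ gives $\int_a^\infty z^p\le(p-1)^p\int_a^\infty x^p$, and then letting $a\to0$ (by monotone convergence, again using $z^p,x^p\ge0$) yields $\int_0^\infty z^p\le(p-1)^p\int_0^\infty x^p$, as required.
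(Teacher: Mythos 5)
Your proof is correct, and its two essential ingredients are the same as the paper's: the differential identity $\frac{d}{ds}\big(sy(s)^p\big)=y(s)^p-py(s)^{p-1}x(s)$ and the pointwise application of Lemma \ref{lemma1} with $t=x/z$. Where you genuinely diverge is the surrounding frame. The paper first reduces to $x$ continuous and compactly supported in $(0,\infty)$, since it suffices to prove the inequality on a dense subset of $L^p(0,\infty)$; for such $x$ both boundary terms vanish exactly ($y$ is identically $0$ beyond the support and constant near $0$), so one gets the clean identity $\int_0^\infty\big(y^p-py^{p-1}x\big)=0$ and integrates the pointwise bound over all of $(0,\infty)$ in one stroke --- this is precisely the ``continuous case is simpler'' payoff the paper advertises. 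You instead treat an arbitrary $x\in L^p(0,\infty)$ directly: absolute continuity of $sy(s)^p$ on compact intervals justifies the Fundamental Theorem of Calculus, the boundary term at $b$ is killed by the H\"older tail estimate $s\,y(s)^p\le(p'-1)^{1-p}\int_s^\infty x^p$ (the same estimate the paper uses in the discrete proof of Theorem \ref{theorem}), and the term at $a$ is simply discarded because $p\in\mathbb E$ forces $ay(a)^p\ge0$. Each route buys something. The paper's density reduction is shorter, but it implicitly relies on $P^T-I$ being already known to be bounded on $L^p$ (via Hardy's inequality) so that the norm bound transfers from the dense class to all of $L^p$; your argument needs no a priori boundedness and no approximation step, at the cost of re-importing the tail estimate and handling the measure-theoretic details (which you do correctly --- in particular your sign trick at $a$ neatly sidesteps having to show $sy(s)^p\to0$ as $s\to0^+$, which is true for general $x\in L^p$ but requires an extra argument). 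Both proofs are complete; yours is the more self-contained, the paper's the more economical.
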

\begin{proof} It suffices to establish the result for functions $x$ in a dense subset of $L^p$ so suppose $x$ is continuous and compactly supported in $(0,\infty)$. Then $y=P^Tx$ satisfies $\lim_{s\to0^+}sy(s)^p=0=\lim_{s\to\infty}sy(s)^p$ and $\frac{d}{ds}(sy(s)^p)=y(s)^p-py(s)^{p-1}x(s)$ so $\int_0^\infty (y^p-py^{p-1}x)=0$. Let $z=y-x$ and $c=p^{p-2}(p-1)^{1-p}$. If $z\ne0$ and $t=x/z$, then Lemma \ref{lemma1} implies
\[
c(z^p-(p-1)^px^p)=cz^p(1-(p-1)^pt^p)\le z^p((1+t)^p-p(1+t)^{p-1}t)=y^p-py^{p-1}x,
\]
which also holds when $z=0$. Integrate to get $\int_0^\infty z^p\le(p-1)^p\int_0^\infty x^p$.
\end{proof}

Next we extend Theorem \ref{theorem} from $p\in\mathbb E$ to all $p>2$ and point out a known lower bound for $\|C^T-I\|_{\ell^p}$.

\begin{theorem}\label{extn1} If $2\le p<\infty$ then $\|C^T-I\|_{\ell^p}=p-1$ and if $1<p\le2$ then $\|C-I\|_{\ell^p}=p'-1=1/(p-1)$.
\end{theorem}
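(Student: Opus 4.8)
The plan is to obtain both statements of Theorem \ref{extn1} by assembling three ingredients: the bound already proved for $p\in\mathbb E$, a matching lower bound, and the duality identity $\|C-I\|_{\ell^p}=\|C^T-I\|_{\ell^{p'}}$ recorded above. The first step is to read Theorem \ref{theorem} as an operator-norm bound. Since each $p\in\mathbb E$ has the form $2i/(2j+1)$, the power $t\mapsto t^p$ is non-negative and even, so $t^p=|t|^p$; hence the conclusion $\sum_n(y_n-x_n)^p\le(p-1)^p\sum_n x_n^p$ is literally $\sum_n|y_n-x_n|^p\le(p-1)^p\sum_n|x_n|^p$, that is, $\|C^T-I\|_{\ell^p}\le p-1$ for every $p\in\mathbb E$.

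Next I would extend this upper bound from the dense set $\mathbb E$ to all of $[2,\infty)$ by a limiting argument in the exponent. Fix a finitely supported real sequence $x$, say supported in $\{1,\dots,N\}$; then $w=(C^T-I)x$ is supported in $\{1,\dots,N\}$ and, crucially, its entries do not depend on $p$, so $p\mapsto\|w\|_p^p=\sum_{n=1}^N|w_n|^p$ and $p\mapsto\|x\|_p^p$ are continuous. Letting $\mathbb E\ni p\to p_0$ therefore gives $\|(C^T-I)x\|_{p_0}\le(p_0-1)\|x\|_{p_0}$ for every finitely supported $x$. For general $x\in\ell^{p_0}$ I would apply this to the truncations $x^{(m)}$: since $((C^T-I)x^{(m)})_n\to((C^T-I)x)_n$ for each fixed $n$ and $\|x^{(m)}\|_{p_0}\le\|x\|_{p_0}$, Fatou's lemma upgrades the inequality to $\|(C^T-I)x\|_{p_0}\le(p_0-1)\|x\|_{p_0}$. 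Thus $\|C^T-I\|_{\ell^p}\le p-1$ for all $p\in[2,\infty)$.

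For the reverse inequality I would invoke the dual form of Hardy's inequality, $\|C^T\|_{\ell^p}=p$ for $1<p<\infty$, together with the triangle inequality applied to $C^T=(C^T-I)+I$, which gives $\|C^T-I\|_{\ell^p}\ge\|C^T\|_{\ell^p}-\|I\|_{\ell^p}=p-1$. (Equivalently one may exhibit the near-extremal sequences $x_n=n^{-1/p}$ truncated at $N$, whose transposes satisfy $y_n\sim p\,x_n$, so that $\|(C^T-I)x\|_p/\|x\|_p\to p-1$.) Combining this with the upper bound yields $\|C^T-I\|_{\ell^p}=p-1$ for $2\le p<\infty$, which also recovers the known value $1$ at $p=2$. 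Finally, for $1<p\le2$ we have $p'\ge2$, so the duality identity and the first part give $\|C-I\|_{\ell^p}=\|C^T-I\|_{\ell^{p'}}=p'-1=1/(p-1)$.

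I expect no genuinely hard step here: the substantive work is already contained in Lemma \ref{lemma1} and Theorem \ref{theorem}. The only place demanding care is the transition from $\mathbb E$ to $[2,\infty)$ and then from finitely supported sequences to arbitrary $x\in\ell^{p_0}$. The continuity in $p$ is immediate because $w=(C^T-I)x$ is independent of $p$ for finitely supported $x$, and the Fatou step handles the density extension without needing to establish boundedness of $C^T-I$ separately, so both transitions are routine.
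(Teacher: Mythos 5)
Your proposal is correct, and it follows the paper's skeleton in three of its four ingredients: reading Theorem \ref{theorem} as the bound $\|C^T-I\|_{\ell^p}\le p-1$ for $p\in\mathbb E$ (using, as you note, that $t^p=|t|^p$ when $p$ is a ratio of an even and an odd integer), the lower bound $\|C^T-I\|_{\ell^p}\ge\|C^T\|_{\ell^p}-1=p-1$ from the dual Hardy inequality \cite{HLP}*{Theorem 331}, and the duality step $\|C-I\|_{\ell^p}=\|C^T-I\|_{\ell^{p'}}$ for $1<p\le2$. Where you genuinely diverge is the passage from the dense set $\mathbb E$ to all $p\ge2$: the paper invokes the Riesz--Thorin theorem, interpolating between two exponents $p_0,p_1\in\mathbb E$ straddling $p$ to get $\|C^T-I\|_{\ell^p}\le(p_0-1)^{1-\theta}(p_1-1)^\theta\le p_1-1$ and then letting $p_1\to p$, whereas you exploit the elementary observation that for $x$ supported in $\{1,\dots,N\}$ the image $(C^T-I)x$ is also supported in $\{1,\dots,N\}$ and is independent of $p$, so that $p\mapsto\|(C^T-I)x\|_p$ and $p\mapsto\|x\|_p$ are continuous and the inequality passes to the limit through $\mathbb E$; the truncation-plus-Fatou step then correctly extends the bound from finitely supported sequences to all of $\ell^p$ (pointwise convergence of $(C^T-I)x^{(m)}$ holds because the defining series converges absolutely by H\"older). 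Your route is more elementary and self-contained — it needs no interpolation machinery and sidesteps the real-versus-complex scalar issue implicit in applying Riesz--Thorin — at the cost of a slightly longer argument; the paper's route is shorter on the page given the citation, and its interpolation template is reused verbatim for Theorem \ref{extn2}, where the norm $m_p^{-1/p}$ is not monotone-friendly in the same simple way but continuity of $p\mapsto m_p$ fills the gap. (Your parenthetical alternative lower bound via truncations of $x_n=n^{-1/p}$ would also work, but it needs more care than the triangle-inequality argument you rightly put first.)
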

\begin{proof} As mentioned above, the case $p=2$ is known to hold. For $p>2$, Theorem \ref{theorem} shows that for $p\in \mathbb E$ with $2<p<\infty$, $\|C^T-I\|_{\ell^p}\le p-1$. The Riesz-Thorin Theorem, see \cite{BS}*{Corollary IV.2.3}, implies that if $2<p_0<p<p_1<\infty$, with $p_0,p_1\in\mathbb E$, then for some $\theta\in(0,1)$, depending on $p_0$, $p$, and $p_1$,
$$
\|C^T-I\|_{\ell^p}\le\|C^T-I\|_{\ell^{p_0}}^{1-\theta}\|C-I\|_{\ell^{p_1}}^\theta=(p_0-1)^{1-\theta}(p_1-1)^\theta\le p_1-1.
$$
Letting $p_1\to p$ through $\mathbb E$, we get $\|C^T-I\|_{\ell^p}\le p-1$. 

The dual discrete Hardy inequality, \cite{HLP}*{Theorem 331}, shows that for $p>1$, 
$\|C^T\|_{\ell^p}=p$. Therefore, $\|C^T-I\|_{\ell^p}\ge\|C^T\|_{\ell^p}-\|I\|_{\ell^p}=p-1$ and we conclude that $\|C^T-I\|_{\ell^p}= p-1$ for all $p>2$. The second statement of the theorem follows from the first by duality.
\end{proof}

The continuous case follows in just the same way because $\|P^T\|_{L^p}=p$. See \cite{HLP}*{Theorem 328}. The proof is omitted.
\begin{corollary} If $2\le p<\infty$ then $\|P^T-I\|_{L^p}=p-1$ and if $1<p\le2$ then $\|P-I\|_{L^p}=p'-1=1/(p-1)$.
\end{corollary}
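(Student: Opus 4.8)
The plan is to transcribe the proof of Theorem \ref{extn1} into the continuous setting, replacing the transpose Ces\`aro operator $C^T$ by the dual averaging operator $P^T$ throughout. First I would dispose of the case $p=2$ by invoking the known value $\|P-I\|_{L^2(0,\infty)}=1$; by duality this also gives $\|P^T-I\|_{L^2(0,\infty)}=1=2-1$. Next, for $p\in\mathbb E$ with $2<p<\infty$, the Corollary following Theorem \ref{theorem} already supplies the required upper bound: it shows $\int_0^\infty(y-x)^p\le(p-1)^p\int_0^\infty x^p$ for $y=P^Tx$, which is exactly $\|P^T-I\|_{L^p(0,\infty)}\le p-1$.

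To pass from the dense set $\mathbb E$ to all of $(2,\infty)$, I would apply the Riesz-Thorin Theorem just as in Theorem \ref{extn1}. For $2<p_0<p<p_1<\infty$ with $p_0,p_1\in\mathbb E$, the operator $P^T-I$ is bounded on both $L^{p_0}(0,\infty)$ and $L^{p_1}(0,\infty)$, so for a suitable $\theta\in(0,1)$,
\[
\|P^T-I\|_{L^p(0,\infty)}\le(p_0-1)^{1-\theta}(p_1-1)^\theta\le p_1-1.
\]
Letting $p_1\to p$ through $\mathbb E$ then gives $\|P^T-I\|_{L^p(0,\infty)}\le p-1$.

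For the matching lower bound I would use $\|P^T\|_{L^p(0,\infty)}=p$ from \cite{HLP}*{Theorem 328} together with $\|I\|_{L^p(0,\infty)}=1$ to get
\[
\|P^T-I\|_{L^p(0,\infty)}\ge\|P^T\|_{L^p(0,\infty)}-\|I\|_{L^p(0,\infty)}=p-1,
\]
which settles $\|P^T-I\|_{L^p(0,\infty)}=p-1$ for every $p\ge2$. The second statement follows by duality: since $P^T$ is the adjoint of $P$ with respect to the pairing $\int_0^\infty fg$, we have $(P-I)^*=P^T-I$, and therefore for $1<p\le2$ (so that $p'\ge2$),
\[
\|P-I\|_{L^p(0,\infty)}=\|P^T-I\|_{L^{p'}(0,\infty)}=p'-1=1/(p-1).
\]

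I expect the only delicate point to be the Riesz-Thorin step, where one must verify that $P^T-I$ is a single linear operator consistently defined across the endpoint spaces (say, on a dense class of functions lying in both $L^{p_0}(0,\infty)$ and $L^{p_1}(0,\infty)$) so that interpolating the two endpoint bounds is legitimate. The rest is a routine translation of the discrete argument, with sums replaced by integrals.
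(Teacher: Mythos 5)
Your proposal is correct and is exactly the argument the paper intends: the paper omits this proof, stating only that ``the continuous case follows in just the same way'' as Theorem \ref{extn1}, using the Corollary to Theorem \ref{theorem} for the upper bound on $\mathbb E$, Riesz--Thorin interpolation to reach all $p>2$, the bound $\|P^T\|_{L^p(0,\infty)}=p$ from \cite{HLP}*{Theorem 328} for the lower bound, and duality for $1<p\le2$. Your transcription fills in precisely those steps, so there is nothing to correct.
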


Next we consider the case $p>2$. To begin we introduce $m_p$, essential for our formula for the operator norm of $C-I$.

\begin{definition}\label{mp} Let $p\ge2$ and set $f_p(t)=pt^{p-1}+(1-t)^p-t^p$. Define $m_p$ to be the minimum value of $f_p(t)$ for $0\le t\le \frac12$.
\end{definition}

\begin{lemma}\label{lemma2} If $p>2$, then $f_p$ has a unique critical point $t_p$ in $(0,\frac12)$, $m_p=f_p(t_p)$ and $m_p$ is a continuous function of $p$. If, in addition, $p\in\mathbb E$, then $t_p$ is the unique critical point of $f_p$ on all of $\mathbb R$ and $f_p(t)\ge m_p$ for all $t\in\mathbb R$.
\end{lemma}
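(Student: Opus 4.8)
The plan is to analyze the function $f_p(t)=pt^{p-1}+(1-t)^p-t^p$ by examining its derivative. I would first compute
\[
f_p'(t)=p(p-1)t^{p-2}-p(1-t)^{p-1}-pt^{p-1}=p\big((p-1)t^{p-2}-(1-t)^{p-1}-t^{p-1}\big).
\]
The strategy is to show that the bracketed factor changes sign exactly once on $(0,\tfrac12)$. At $t\to0^+$ the dominant term is $(p-1)t^{p-2}$, which tends to $+\infty$ (since $p-2<p-1$), so $f_p'(t)>0$ near $0$. At $t=\tfrac12$ one computes $f_p'(\tfrac12)=p\big((p-1)2^{2-p}-2\cdot2^{1-p}\big)=p\,2^{2-p}(p-2)>0$ when $p>2$; this is the wrong sign for a minimum in the interior, so I expect the minimum to occur where $f_p'$ dips negative in between, forcing me to look more carefully at the shape of $f_p'$ rather than just its endpoint values.

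Since an interior minimum on a closed interval with positive derivative at both endpoints requires at least two interior critical points (a local max followed by a local min) unless the minimum is at an endpoint, I would instead study $g(t)=(p-1)t^{p-2}-(1-t)^{p-1}-t^{p-1}$ directly. The natural approach is to differentiate once more: $g'(t)=(p-1)(p-2)t^{p-3}+(p-1)(1-t)^{p-2}-(p-1)t^{p-2}$. For uniqueness of the interior critical point I would try to show $g$ is strictly convex or that $g'$ has a single sign change, using that each power function is convex. The cleanest route is probably to show $f_p$ itself is strictly convex on $(0,\tfrac12)$, since $f_p''(t)=p(p-1)(p-2)t^{p-3}+p(p-1)(1-t)^{p-2}-p(p-1)t^{p-2}$, and on $(0,\tfrac12)$ we have $1-t>t$, so $(1-t)^{p-2}>t^{p-2}$, making the last two terms combine to something positive; together with the first positive term this gives $f_p''>0$ throughout $(0,\tfrac12)$. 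Strict convexity on the interval immediately yields at most one interior critical point, and combined with the boundary behavior ($f_p'(t)\to+\infty$ as $t\to0^+$ is not quite right since $f_p'(0^+)$ behaves like $(p-1)t^{p-2}\to+\infty$, while $f_p'(\tfrac12)>0$) I must reconcile that a strictly convex function with $f_p'>0$ at both ends has no interior critical point at all. This tension signals that the sign analysis near $0$ needs care: the term $-(1-t)^{p-1}$ is bounded while $(p-1)t^{p-2}\to+\infty$, confirming $f_p'(0^+)=+\infty$, so the unique interior critical point must come from $f_p'$ being \emph{negative} somewhere, contradicting convexity.

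Resolving this is the main obstacle, and it tells me $f_p$ is not convex across all of $(0,\tfrac12)$; rather, the second derivative must change sign, so I would abandon the convexity route and instead prove that $g(t)=(p-1)t^{p-2}-(1-t)^{p-1}-t^{p-1}$ is strictly decreasing on $(0,\tfrac12)$. I would verify $g'(t)<0$ there by writing $g'(t)=(p-1)\big[(p-2)t^{p-3}-t^{p-2}+(1-t)^{p-2}\big]$ and bounding the negative contributions; since $g$ decreases from $+\infty$ to the value $g(\tfrac12)=(p-1)2^{2-p}-2^{2-p}=(p-2)2^{2-p}>0$, this would say $g>0$ on all of $(0,\tfrac12]$, again giving no interior zero. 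Because the endpoint computation robustly gives $f_p'(\tfrac12)>0$, I now suspect the true minimizer satisfies $t_p<\tfrac12$ with $g$ crossing zero, which means $g$ cannot be monotone; the correct picture is that $g$ decreases from $+\infty$, reaches a minimum, then increases back to $g(\tfrac12)>0$, dipping below zero in between precisely when $p>2$. Thus the key step is to show $g$ has exactly one interior minimum (via $g'$ having a single zero, which follows because $g'(t)/(p-1)=(p-2)t^{p-3}-t^{p-2}+(1-t)^{p-2}$ is itself analyzable by one more differentiation) and that at this minimum $g<0$, yielding exactly two sign changes of $g$ but only one lying in $(0,\tfrac12)$ after checking that the left crossing is the relevant critical point $t_p$. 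Establishing $f_p(t_p)=m_p$ and continuity of $m_p$ in $p$ then follows from the implicit function theorem applied to $g(t_p)=0$, using $g'(t_p)\neq0$. Finally, for $p\in\mathbb E$ the power functions extend smoothly to all of $\mathbb R$, and I would extend the sign analysis of $f_p'$ to $(-\infty,0)$ and $(\tfrac12,\infty)$—using the symmetry-breaking term $pt^{p-1}$ and the growth of $(1-t)^p-t^p$—to rule out additional critical points, thereby upgrading the interior result to the global statement $f_p(t)\ge m_p$ on all of $\mathbb R$.
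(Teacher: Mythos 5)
Your proposal founders on a single sign error that derails the entire argument: you claim that as $t\to0^+$ the term $(p-1)t^{p-2}$ tends to $+\infty$. But here $p>2$, so the exponent $p-2$ is \emph{positive} and $t^{p-2}\to0$; the bracket $(p-1)t^{p-2}-(1-t)^{p-1}-t^{p-1}$ tends to $0-1-0=-1$, so $f_p'(0^+)=-p<0$, not $+\infty$. (Your parenthetical justification ``since $p-2<p-1$'' reveals the slip: what matters is the sign of $p-2$, not its comparison with $p-1$.) With the correct sign there is no ``tension'' to resolve: you had already established, correctly, that $f_p''(t)=p(p-1)\big((p-2)t^{p-3}+(1-t)^{p-2}-t^{p-2}\big)>0$ on $(0,\tfrac12)$ because $1-t>t$ there. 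Hence $f_p'$ is strictly increasing on $[0,\tfrac12]$, runs from $f_p'(0)=-p<0$ to $f_p'(\tfrac12)=p(p-2)2^{2-p}>0$, and therefore has exactly one zero $t_p$, which is the minimizer; continuity of $m_p$ then follows from the implicit function theorem exactly because $f_p''$ never vanishes. This is precisely the paper's proof --- you had it in hand and abandoned it because of the erroneous limit.

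The alternative route you took instead cannot be repaired into a proof of the lemma. Your final picture has $g(t)=(p-1)t^{p-2}-(1-t)^{p-1}-t^{p-1}$ positive near both endpoints of $(0,\tfrac12)$ and dipping below zero in between; that would force \emph{two} zeros of $g$ inside $(0,\tfrac12)$, i.e.\ two critical points of $f_p$, contradicting the very uniqueness you are asked to prove (your remark that only one crossing lies in $(0,\tfrac12)$ is inconsistent with your own computation $g(\tfrac12)>0$, which places both crossings before $\tfrac12$). Moreover, the implicit-function-theorem step needs $g'(t_p)\ne0$, which your picture does not secure, whereas under the convexity argument it is automatic. Finally, for the global statement when $p\in\mathbb E$, your plan is only a vague sketch; the paper makes it precise by using evenness of $t\mapsto t^p$ to write $f_p'(t)=p(p-1)\int_{t-1}^t\big(t^{p-2}-s^{p-2}\big)\,ds$ and comparing $|s|$ with $|t|$ to conclude $f_p'<0$ on $(-\infty,0]$ and $f_p'>0$ on $[\tfrac12,\infty)$, so that no critical points exist outside $(0,\tfrac12)$ and $f_p\ge m_p$ everywhere.
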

\begin{proof} On $(0,\frac12)$, $f_p'(t)=p((p-1)t^{p-2}-(1-t)^{p-1}-t^{p-1})$. It extends to be continuous on $[0,\frac12]$ with $f_p'(0)=-p<0$, and $f_p'(\frac12)=p(p-2)2^{2-p}>0$. On $(0,\frac12)$, $0<t<1-t$ so $f_p''(t)=p(p-1)((p-2)t^{p-3}+(1-t)^{p-2}-t^{p-2})>0$.

Therefore $f_p'$ is strictly increasing on $[0,\frac12]$, $f_p$ has a unique critical point $t_p$ in $(0,\frac12)$ and $f_p(t_p)$ is the minimum value of $f_p$, namely $m_p$. For any $p_0>2$, the function $(p,t)\mapsto f_p(t)$ is uniformly continuous on $[2,p_0]\times[0,\frac12]$. It follows that $p\mapsto m_p$ is continuous on $[2,\infty)$.


If $p\in\mathbb E$, then $f_p$ is defined on $\mathbb R$, $(1-t)^p=(t-1)^p$, and 
\[
f_p'(t)=p((p-1)t^{p-2}+(t-1)^{p-1}-t^{p-1})=p(p-1)\int_{t-1}^t(t^{p-2}-s^{p-2})\,ds.
\]
If $t\le0$ then $t-1<s<t$ implies $|t|<|s|$ so $t^{p-2}<s^{p-2}$ and we have $f_p'(t)<0$. Thus, $f_p$ is strictly decreasing on $(-\infty,0]$. If $t\ge\frac12$ then $t-1<s<t$ implies $|s|<t$ so $s^{p-2}<t^{p-2}$ and we have $f_p'(t)>0$. Thus $f_p$ is strictly increasing on $[\frac12,\infty)$. It follows that $t_p$ is the unique critical point of $f_p$ on $\mathbb R$ and $f_p(t)\ge m_p$ for all $t\in \mathbb R$.
\end{proof}


In \cite{J}*{Theorem 1}, the upper bound $\|C-I\|_{\ell^4}\le3^{1/4}$ was proved. We employ a similar method to extend it to an upper bound for all $p>2$. 

\begin{theorem}\label{upper} Let $p\in\mathbb E$, let $(x_n)\in\ell^p$ be a real sequence, and set $y_n=\frac1n\sum_{k=1}^n x_k$. Then
\[
\sum_{k=1}^\infty (y_k-x_k)^p\le \frac1{m_p}\sum_{k=1}^\infty x_k^p.
\]
\end{theorem}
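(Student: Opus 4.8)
The plan is to mirror the proof of Theorem \ref{theorem} as closely as possible, using Lemma \ref{lemma2} in place of Lemma \ref{lemma1} and reversing the directions of all the inequalities. Write $z_n=y_n-x_n$ and recall that $ny_n=\sum_{k=1}^n x_k$, so $x_n=ny_n-(n-1)y_{n-1}$ and hence $z_n=(n-1)(y_{n-1}-y_n)$, with $y_0=0$. The relevant telescoping sum now runs the other way, $\sum_{n=1}^N\big(ny_n^p-(n-1)y_{n-1}^p\big)=Ny_N^p$. First I would check, exactly as in Theorem \ref{theorem} but via a H\"older estimate on the defining average (splitting off a tail $\sum_{k>M}x_k^p$), that $Ny_N^p\to0$ as $N\to\infty$.

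Next I would bound each telescoped term from above (whereas Theorem \ref{theorem} bounds from below). Writing $ny_n^p-(n-1)y_{n-1}^p=y_n^p+(n-1)(y_n^p-y_{n-1}^p)$ and applying the right-hand inequality in (\ref{mvt}) with $a=y_{n-1}$, $b=y_n$, together with $(n-1)(y_n-y_{n-1})=-z_n$, gives
\[
ny_n^p-(n-1)y_{n-1}^p\le y_n^p-py_n^{p-1}z_n.
\]

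The key step is to recognise the right-hand side, via Lemma \ref{lemma2}, as a bound by $x_n^p-m_pz_n^p$. Indeed, when $z_n\ne0$ I would set $\sigma=y_n/z_n$ and divide by $z_n^p$; since $y_n-z_n=x_n$ and $t\to t^p$ is even on $\mathbb R$, the inequality $y_n^p-py_n^{p-1}z_n\le x_n^p-m_pz_n^p$ is exactly $p\sigma^{p-1}+(1-\sigma)^p-\sigma^p\ge m_p$, i.e. $f_p(\sigma)\ge m_p$, which Lemma \ref{lemma2} supplies for all real $\sigma$. The case $z_n=0$ is immediate. Chaining the two bounds yields $ny_n^p-(n-1)y_{n-1}^p\le x_n^p-m_pz_n^p$ for every $n$, so
\[
Ny_N^p=\sum_{n=1}^N\big(ny_n^p-(n-1)y_{n-1}^p\big)\le\sum_{n=1}^N\big(x_n^p-m_pz_n^p\big).
\]
Rearranged, this reads $m_p\sum_{n=1}^N z_n^p\le\sum_{n=1}^N x_n^p-Ny_N^p\le\sum_{n=1}^\infty x_n^p$, which already bounds the partial sums of $\sum z_n^p$; letting $N\to\infty$ and using $Ny_N^p\to0$ gives $m_p\sum z_n^p\le\sum x_n^p$, the desired inequality.

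The main obstacle is getting every inequality to point the correct way so that Lemma \ref{lemma2} applies. Compared with Theorem \ref{theorem}, the Ces\`aro recursion couples $y_{n-1}$ and $y_n$ with the unequal weights $n-1$ and $n$, and it is tempting to seek a \emph{lower} bound on the telescoped term (as in the transpose case); that route forces a spurious $y_{n-1}^{p-1}$ factor and does not close. The resolution is to bound from above and to aim for the pointwise target $x_n^p-m_pz_n^p$ (with $m_p$ multiplying $z_n^p$, not dividing $x_n^p$): only this formulation reduces cleanly to $f_p(\sigma)\ge m_p$ under $\sigma=y_n/z_n$. Verifying that substitution — and that it is $f_p$ on \emph{all} of $\mathbb R$ that is required, which is precisely why Lemma \ref{lemma2} is stated globally — is the crux; everything else parallels Theorem \ref{theorem}.
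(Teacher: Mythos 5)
Your proof is correct and takes essentially the same route as the paper's: the same telescoping identity $\sum_{n=1}^N\big(ny_n^p-(n-1)y_{n-1}^p\big)=Ny_N^p$, the same application of (\ref{mvt}) (your bound $y_n^p-py_n^{p-1}z_n$ is algebraically identical to the paper's $py_n^{p-1}x_n-(p-1)y_n^p$), and the same reduction via $\sigma=y_n/z_n$ to $f_p(\sigma)\ge m_p$ on all of $\mathbb R$ from Lemma \ref{lemma2}. The only (harmless) deviation is your H\"older argument that $Ny_N^p\to0$, which is true but unnecessary: since $p\in\mathbb E$ gives $Ny_N^p\ge0$, your own partial-sum bound $m_p\sum_{n=1}^N z_n^p\le\sum_{n=1}^\infty x_n^p$ already closes the proof, exactly as in the paper.
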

\begin{proof} Fix a $y_0$ arbitrarily and observe that $(n-1)(y_{n-1}-y_n)=y_n-x_n$ for $n=1,2,\dots$. By (\ref{mvt}), 
\[
(n-1)(y_{n-1}^p-y_n^p)\ge (n-1)py_n^{p-1}(y_{n-1}-y_n)=py_n^{p-1}(y_n-x_n).
\]
This becomes
\[
ny_n^p-(n-1)y_{n-1}^p\le py_n^{p-1}x_n-(p-1)y_n^p=y_n^{p-1}(px_n-(p-1)y_n).
\]
Take $z_n=y_n-x_n$ and $t=y_n/z_n$ to get $px_n-(p-1)y_n=z_n(t-p)$. By Lemma \ref{lemma2},
\[
ny_n^p-(n-1)y_{n-1}^p\le z_n^p(t^p-pt^{p-1})\le z_n^p((t-1)^p-m_p)=x_n^p-m_pz_n^p.
\]
Summing from $1$ to $N$ gives
\[
0\le Ny_N^p=\sum_{n=1}^N(ny_n^p-(n-1)y_{n-1}^p)\le\sum_{n=1}^Nx_n^p -m_p\sum_{n=1}^Nz_n^p.
\]
Letting $N\to\infty$ we have 
\[
\sum_{n=1}^\infty z_n^p\le\frac1{m_p}\sum_{n=1}^\infty x_n^p.\qedhere
\]
\end{proof}

\begin{corollary} Let $p\in\mathbb E$, let $x\in L^p$ and set $y(s)=Px(s)=\frac1s\int_0^sx$. Then $\int_0^\infty(y-x)^p\le\frac1{m_p}\int_0^\infty x^p$.
\end{corollary}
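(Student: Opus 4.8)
The plan is to mirror the proof of Theorem \ref{upper}, replacing the discrete summation-by-parts with the fundamental theorem of calculus, exactly as the preceding corollary adapts Theorem \ref{theorem}. As there, it suffices to establish the bound on a dense subset of $L^p(0,\infty)$, so I would take $x$ continuous and compactly supported in $(0,\infty)$. Then $y=Px$ is continuously differentiable on $(0,\infty)$, and differentiating $sy(s)=\int_0^s x$ gives $y+sy'=x$, that is, $sy'(s)=x(s)-y(s)$.

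The key step is the continuous analogue of the inequality $ny_n^p-(n-1)y_{n-1}^p\le y_n^{p-1}(px_n-(p-1)y_n)$ from Theorem \ref{upper}; here it is an exact identity. Writing $z=y-x$, I would compute
\[
\frac{d}{ds}\big(sy(s)^p\big)=y^p+py^{p-1}(sy')=y^p+py^{p-1}(x-y)=y^{p-1}\big(px-(p-1)y\big).
\]
Setting $t=y/z$ where $z\ne0$, the same algebra as in Theorem \ref{upper} gives $px-(p-1)y=z(t-p)$ and hence $\frac{d}{ds}(sy^p)=z^p(t^p-pt^{p-1})$. Lemma \ref{lemma2} then yields $t^p-pt^{p-1}\le (t-1)^p-m_p$, and since $p\in\mathbb E$ and $(t-1)z=x$ we have $z^p(t-1)^p=x^p$, so that $\frac{d}{ds}(sy^p)\le x^p-m_pz^p$ pointwise. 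As in Theorem \ref{upper} and the previous corollary, this inequality persists as an equality at points where $z=0$, so it holds everywhere.

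Finally I would integrate over $(0,\infty)$. Because $x$ is compactly supported in $(0,\infty)$, $y(s)=0$ for small $s$, so $sy(s)^p\to0$ as $s\to0^+$; and since $sy(s)=\int_0^\infty x$ is eventually constant, $y(s)=O(1/s)$ and $sy(s)^p=O(s^{1-p})\to0$ as $s\to\infty$. Thus $\int_0^\infty\frac{d}{ds}(sy^p)\,ds=0$, and integrating the pointwise inequality gives $0\le\int_0^\infty x^p-m_p\int_0^\infty z^p$, which rearranges to $\int_0^\infty(y-x)^p\le\frac1{m_p}\int_0^\infty x^p$.

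I expect no serious obstacle. The only points requiring care are the vanishing of the boundary terms for $sy(s)^p$, handled by the support and decay of $y$ just noted, and the identity $z^p(t-1)^p=x^p$, which relies on $p\in\mathbb E$ so that $\tau\mapsto\tau^p$ is defined with $(-1)^p=1$ on all of $\mathbb R$, exactly the property exploited earlier in the excerpt.
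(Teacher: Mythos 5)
Your proof is correct and takes essentially the same route as the paper's: both reduce to continuous, compactly supported $x$, compute $\frac{d}{ds}\big(sy(s)^p\big)=(1-p)y^p+py^{p-1}x$, bound this pointwise by $x^p-m_pz^p$ using Lemma \ref{lemma2} with $t=y/z$ (checking the case $z=0$ separately), and integrate using the vanishing of $sy(s)^p$ at both endpoints.
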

\begin{proof} It suffices to establish the result for functions $x$ in a dense subset of $L^p$ so suppose $x$ is continuous and compactly supported in $(0,\infty)$. Then $y=Px$ satisfies $\lim_{s\to0^+}sy(s)^p=0=\lim_{s\to\infty}sy(s)^p$ and $\frac{d}{ds}(sy(x)^p)=(1-p)y(s)^p+py(s)^{p-1}x(s)$ so $\int_0^\infty ((1-p)y^p+py^{p-1}x)=0$. Let $z=y-x$. If $z\ne0$ and $t=y/z$, then Lemma \ref{lemma2} implies
\[
x^p-m_pz^p=z^p((t-1)^p-m_p)\ge z^p(t^p-pt^{p-1})=(1-p)y^p+py^{p-1}x,
\]
which also holds when $z=0$. Integrate to get $\int_0^\infty z^p\le\frac1{m_p}\int_0^\infty x^p$.
\end{proof}

We again pass from $p\in\mathbb E$ to all $p>2$ using the Riesz-Thorin Theorem. However, this time the lower bound requires some work.

\begin{theorem}\label{extn2} If $p\ge2$ then $\|C-I\|_{\ell^p}= m_p^{-1/p}$.
\end{theorem}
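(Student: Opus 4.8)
The plan is to prove the upper bound $\|C-I\|_{\ell^p}\le m_p^{-1/p}$ first and then the matching lower bound. For the upper bound I would follow the pattern of Theorem \ref{extn1}. Theorem \ref{upper} already gives $\|C-I\|_{\ell^p}\le m_p^{-1/p}$ for every $p\in\mathbb E$, and $C-I$ is bounded on all $\ell^q$ with $q>1$, so for a general $p>2$ I would fix $p_0,p_1\in\mathbb E$ with $p_0<p<p_1$ and apply the Riesz--Thorin theorem to obtain, for the appropriate $\theta\in(0,1)$ with $1/p=(1-\theta)/p_0+\theta/p_1$,
\[
\|C-I\|_{\ell^p}\le\|C-I\|_{\ell^{p_0}}^{1-\theta}\|C-I\|_{\ell^{p_1}}^\theta\le m_{p_0}^{-(1-\theta)/p_0}m_{p_1}^{-\theta/p_1}.
\]
Letting $p_0\to p^-$ and $p_1\to p^+$ through $\mathbb E$ and using the continuity of $q\mapsto m_q$ from Lemma \ref{lemma2} (hence of $q\mapsto m_q^{-1/q}$, since $m_q>0$), the right-hand side tends to $m_p^{-1/p}$, giving $\|C-I\|_{\ell^p}\le m_p^{-1/p}$ for all $p>2$; the case $p=2$ is known.

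The substance is the lower bound $\|C-I\|_{\ell^p}\ge m_p^{-1/p}$, for which I would construct near-extremal sequences guided by the equality analysis of Theorem \ref{upper}. Tracing that proof, equality in Lemma \ref{lemma2} forces the ratio $t_n=y_n/(y_n-x_n)$ to equal the minimizer $t_p$, while equality in the Mean Value step (\ref{mvt}) forces $y$ to vary slowly. Imposing $t_n=t_p$ gives $y_n/y_{n-1}=(n-1)t_p/((n-1)t_p+1)$ and hence the profile $y_n\sim c\,n^{-1/t_p}$. I would therefore take truncations of this profile as trial sequences, compute $\sum(y_n-x_n)^p\big/\sum x_n^p$, and try to show the limiting value is $1/m_p$.

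The hard part is that this profile is not itself in $\ell^p$: since $t_p<\tfrac12<p$ we have $p/t_p>1$, so $\sum n^{-p/t_p}$ converges but its mass concentrates at the smallest indices, which is exactly where the two equality conditions above become incompatible (the identity $y_1=x_1$ forces $z_1=0$). A naive truncation then incurs a boundary defect of the same order as the bulk and leaves the ratio strictly below $1/m_p$. To get around this I would pass to the dilation-invariant continuous model, where under $s=e^u$ the operator $P-I$ becomes convolution by $k-\delta_0$, with $k(w)=e^{-w/p'}$ for $w>0$ and $k(w)=0$ otherwise, on $L^p(\mathbb R)$. There I would set up and analyze the Euler--Lagrange equation for the extremal ratio to produce a genuine one-parameter near-extremal family whose value is $m_p^{-1/p}$, the single free parameter matching the one-dimensional minimization defining $m_p$. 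Finally I would transfer the bound back to $\ell^p$ by dilating each near-extremizer to a large scale and sampling it, so that the discrete averages approximate the integral averages and $\|C-I\|_{\ell^p}\ge\|P-I\|_{L^p(0,\infty)}\ge m_p^{-1/p}$. I expect the delicate points to be the rigorous control of the boundary term as the truncation scale tends to infinity, and the justification that this discretization preserves the ratio in the limit.
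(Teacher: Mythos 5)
Your upper-bound argument is exactly the paper's: Theorem \ref{upper} on $\mathbb E$, Riesz--Thorin, and the continuity of $p\mapsto m_p$ from Lemma \ref{lemma2}. That half is complete. The lower bound, however, is a program rather than a proof: both of its load-bearing steps are named but not executed. You propose (a) an Euler--Lagrange analysis in the exponential-variable convolution model to produce a near-extremal family, and (b) a transference inequality $\|C-I\|_{\ell^p}\ge\|P-I\|_{L^p(0,\infty)}$ obtained by dilating and sampling. Neither is carried out, and (b) is not a citable fact at this point in the paper --- the equality of the discrete and continuous norms is something the paper only obtains \emph{as a corollary of} the discrete theorem you are trying to prove. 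Making the sampling argument rigorous (Riemann-sum control of both $\|x\|_{\ell^p}$ and $\|(C-I)x\|_{\ell^p}$ for sampled dilates of a nice near-extremizer) is work of the same order as the direct discrete construction, so deferring it leaves the hard part of the theorem undone. The Euler--Lagrange step is also unnecessary: the continuous extremizer can simply be written down ($x=-1$ on $(0,1)$, $x(s)=(r-1)s^{-r}$ on $(1,\infty)$ with $r=1/t_p$, which gives exact equality $\int|z|^p=\frac1{m_p}\int|x|^p$), as the paper's corollary does.

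More importantly, the premise that forces your detour is mistaken. You treat the region where $z_n=0$ as a ``boundary defect'' that caps the ratio strictly below $1/m_p$; in fact the flat piece is part of the extremal configuration, not an obstruction. When $z_n=0$ the key inequality in Theorem \ref{upper} holds with \emph{equality} (both sides reduce to $x_n^p$), and the flat piece's contribution to $\sum x_n^p$ is precisely the $pt^{p-1}$ term in $f_p(t)=pt^{p-1}+(1-t)^p-t^p$. The paper exploits this directly: with $r=1/t_p$ and an integer $m$, take $x_n=-m^{-r}$ for $n\le m$ and $x_n=(n-1)^{1-r}-n^{1-r}$ for $n>m$, so that $y_n=-m^{-r}$, $z_n=0$ on the flat part and $y_n=-n^{-r}$ beyond it. Two elementary Riemann-sum estimates then give
\[
\frac{\sum_{n=1}^\infty|z_n|^p}{\sum_{n=1}^\infty|x_n|^p}\longrightarrow\frac{r^p}{(pr-1)+(r-1)^p}=\frac1{m_p}\qquad(m\to\infty),
\]
the last equality being your own observation that $y/z\equiv t_p$ on the power-law part, since $r^pm_p=pr+(r-1)^p-1$. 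The growing length $m$ of the flat segment is exactly your ``dilation to a large scale'': it moves the transition out to where discrete sums track integrals with vanishing relative error, so no continuous model and no sampling lemma are needed. (As a small point, your parenthetical claim that the profile $n^{-1/t_p}$ ``is not itself in $\ell^p$'' contradicts the convergence of $\sum n^{-p/t_p}$ asserted in the same sentence; the profile is in $\ell^p$, and the true issue --- which the flat segment eliminates --- is the $O(1)$ discretization error at small indices.)
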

\begin{proof} It is easy to verify that $m_2=1$, so the case $p=2$ agrees with the known result. Now suppose $p>2$. Theorem \ref{upper} shows that $\|C-I\|_{\ell^p}\le m_p^{-1/p}$ for all $p\in\mathbb E$. The Riesz-Thorin theorem implies that if $2<p_0<p<p_1<\infty$, with $p_0,p_1\in\mathbb E$, then for some $\theta\in(0,1)$, depending on $p_0$, $p$, and $p_1$,
$$
\|C-I\|_{\ell^p}\le\|C-I\|_{\ell^{p_0}}^{1-\theta}\|C-I\|_{\ell^{p_1}}^\theta
\le\max(m_{p_0}^{-1/p_0},m_{p_1}^{-1/p_1}).
$$
Letting $p_0$ and $p_1$ approach $p$ through $\mathbb E$, the continuity of $p\mapsto m_p$ implies that $\|C-I\|_{\ell^p}\le m_p^{-1/p}$.

To prove the reverse inequality we set $r=1/t_p$, where $t_p$ is the critical point from Lemma \ref{lemma2}, and fix an integer $m>1$. Note that $r>2$. Define $x_n=-m^{-r}$ for $n\le m$ and $x_n=(n-1)^{1-r}-n^{1-r}$ for $n>m$. Then, with $y_n=\frac1n\sum_{k=1}^nx_k$ and $z_n=y_n-x_n$,
\[
y_n=\begin{cases}-m^{-r},&n\le m;\cr -n^{-r},&n\ge m\end{cases}\quad\mbox{and}\quad
z_n=\begin{cases}0,&n\le m;\cr n^{1-r}-(n-1)^{1-r}-n^{-r},&n>m\end{cases}
\]
If $n\ge m+1$, then
\[
0<x_n=(r-1)\int_{n-1}^nt^{-r}\,dt\le (r-1)(n-1)^{-r}
\]
so, employing a standard Riemann sum estimate, we get
\[
\sum_{n=1}^\infty|x_n|^p\le m^{1-pr}+(r-1)^p\sum_{n=m+1}^\infty(n-1)^{-pr}
\le m^{1-pr}+(r-1)^p\frac{(m-1)^{1-pr}}{pr-1}.
\]
Also, if $n\ge m+1$, then $\frac{n-1}n\ge\frac m{m+1}$ and
\[
-z_n=r(n-1)\int_{n-1}^nt^{-r-1}\,dt\ge r(n-1)n^{-r-1}\ge r\frac m{m+1}n^{-r}>0
\]
so, using another standard Riemann sum estimate, we get
\[
\sum_{n=1}^\infty|z_n|^p\ge r^p\Big(\frac m{m+1}\Big)^p\sum_{n=m+1}^\infty n^{-pr}
\ge r^p\Big(\frac m{m+1}\Big)^p\frac{(m+1)^{1-pr}}{pr-1}.
\]
We conclude that 
\[
\frac{\sum_{n=1}^\infty|z_n|^p}{\sum_{n=1}^\infty|x_n|^p}
\ge\frac{r^p\big(\frac m{m+1}\big)^p}{(pr-1)\big(\frac m{m+1}\big)^{1-pr}+(r-1)^p\big(\frac {m-1}{m+1}\big)^{1-pr}}\to\frac{r^p}{(pr-1)+(r-1)^p}
\]
as $m\to\infty$. Since $r=1/t_p$, Lemma \ref{lemma2} shows that the last expression is $1/m_p$. This implies that the operator norm of $C-I$ on $\ell^p$ cannot be less than $m_p^{-1/p}$, which completes the proof.
\end{proof}

\begin{corollary} If $p\ge2$ then $\|P-I\|_{L^p}= m_p^{-1/p}$.
\end{corollary}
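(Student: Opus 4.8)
The plan is to establish this continuous companion of Theorem~\ref{extn2} by matching upper and lower bounds, reading the norm as the operator norm of $P-I$ on $L^p(0,\infty)$, the space on which the averaging operator $P$ acts. For the upper bound I would invoke the corollary following Theorem~\ref{upper}, which already gives $\int_0^\infty(Px-x)^p\le m_p^{-1}\int_0^\infty x^p$, and hence $\|P-I\|_{L^p(0,\infty)}\le m_p^{-1/p}$, for every $p\in\mathbb E$. To reach all $p>2$ I would interpolate exactly as in Theorem~\ref{extn2}: Riesz--Thorin applied with exponents $2<p_0<p<p_1<\infty$ in $\mathbb E$ bounds $\|P-I\|_{L^p(0,\infty)}$ by $\max(m_{p_0}^{-1/p_0},m_{p_1}^{-1/p_1})$, and letting $p_0,p_1\to p$ through $\mathbb E$, together with the continuity of $p\mapsto m_p$ from Lemma~\ref{lemma2}, gives $\|P-I\|_{L^p(0,\infty)}\le m_p^{-1/p}$. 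The case $p=2$ is covered by $m_2=1$.

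For the matching lower bound I would build the continuous analogue of the extremal sequence of Theorem~\ref{extn2}. With $r=1/t_p>2$ and a fixed cutoff $a>0$, set $x(s)=-a^{-r}$ on $(0,a]$ and $x(s)=(r-1)s^{-r}$ on $(a,\infty)$. A direct integration then shows that $y=Px$ equals the constant $-a^{-r}$ on $(0,a]$ and equals $-s^{-r}$ on $(a,\infty)$, so $z=y-x$ vanishes on $(0,a]$ and equals $-rs^{-r}$ on $(a,\infty)$. Because these are pure powers, the ratio $\int_0^\infty|z|^p/\int_0^\infty|x|^p$ evaluates in closed form to $r^p/\big((pr-1)+(r-1)^p\big)$, with no Riemann-sum estimate and, crucially, independently of $a$. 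By Lemma~\ref{lemma2} and the identity $f_p(t_p)=r^{-p}\big(pr-1+(r-1)^p\big)$, valid since $t_p=1/r$, this ratio equals $1/m_p$, which forces $\|P-I\|_{L^p(0,\infty)}\ge m_p^{-1/p}$.

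Combining the two bounds yields $\|P-I\|_{L^p(0,\infty)}=m_p^{-1/p}$, and together with Theorem~\ref{extn2} this also records the promised equality $\|P-I\|_{L^p(0,\infty)}=\|C-I\|_{\ell^p}$. As the remark before Theorem~\ref{extn2} warns, the lower bound is the delicate step: I must check that $x\in L^p(0,\infty)$, where the truncation on $(0,a]$ secures integrability near $0$ while $r>2$ and $pr>1$ control the tail, and that $Px$ has the stated form, in particular that $\int_0^s x=-s^{1-r}$ on $(a,\infty)$ with correct matching at $s=a$. Once these are confirmed the norm computation is immediate; unlike the discrete argument it needs no limit in the cutoff, which illustrates the simplification in the continuous case noted earlier in the paper.
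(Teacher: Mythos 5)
Your proposal is correct and follows essentially the same route as the paper: the upper bound via the corollary to Theorem~\ref{upper} plus Riesz--Thorin interpolation and continuity of $p\mapsto m_p$, and the lower bound via the exact extremal function $x(s)=-a^{-r}$ on $(0,a]$, $x(s)=(r-1)s^{-r}$ on $(a,\infty)$ with $r=1/t_p$, which is just a dilation of the paper's choice $a=1$ (and, as you note, the ratio is $a$-independent). Your computations, including $f_p(t_p)=r^{-p}\bigl(pr-1+(r-1)^p\bigr)$ and the integrability checks, match the paper's argument.
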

\begin{proof} The upper bound is extended from $p\in\mathbb E$ to all $p>2$ just as in the last theorem. However, proving the reverse inequality is much simpler. With $t_p$ as in Lemma \ref{lemma2} and $r=1/t_p$, let $x(s)=-1$ on $(0,1)$ and $x(s)=(r-1)s^{-r}$ on $(1,\infty)$. With $y=Px$ and $z=y-x$ we compute $y(s)=-1$ on $(0,1)$ and $y(s)=-s^{-r}$ on $(1,\infty)$; and $z(s)=0$ on $(0,1)$ and $z(s)=-rs^{-r}$ on $(1,\infty)$. Then 
\[
\int_0^\infty|x|^p=1+\frac{(r-1)^p}{pr-1}\quad\mbox{and}\quad 
\int_0^\infty|z|^p=\frac{r^p}{pr-1}.
\]
It follows that $\int_0^\infty|z|^p=\frac1{m_p}\int_0^\infty|x|^p$, which gives the lower bound.
\end{proof}

The expressions for $\|P-I\|_{L^p}$ given above and in \cite{S17} must coincide, but a direct connection is still worth making: In \cite{S17}, we find $\|P-I\|_{L^p}^p=\sup_{\alpha\le p'}\frac{|\alpha-1|^p}{p(1-\alpha)-1+|\alpha|^p}$. With $t=1/(1-\alpha)$ this expression readily reduces to $1/m_p$ for $p\in\mathbb E$, by applying Lemma \ref{lemma2}. Equality for all $p$ follows by continuity.

Acknowledgements. My thanks to Santiago Boza for providing references to results in the continuous case and to Graham Jameson for an improvement to the proof of Lemma \ref{lemma1}.

\bibliographystyle{amsplain}
\begin{bibdiv}
\begin{biblist}
\bib{BJ}{article}{
   author={Ba\~{n}uelos, Rodrigo},
   author={Janakiraman, Prabhu},
   title={On the weak-type constant of the Beurling-Ahlfors transform},
   journal={Michigan Math. J.},
   volume={58},
   date={2009},
   number={2},
   pages={459--477},
   issn={0026-2285},
   review={\MR{2595549}},
   doi={10.1307/mmj/1250169072},
}
\bib{BO}{article}{
   author={Ba\~{n}uelos, Rodrigo},
   author={Os\c{e}kowski, Adam},
   title={Sharp inequalities for the Beurling-Ahlfors transform on radial
   functions},
   journal={Duke Math. J.},
   volume={162},
   date={2013},
   number={2},
   pages={417--434},
   issn={0012-7094},
   review={\MR{3018958}},
   doi={10.1215/00127094-1962649},
}
\bib{BS}{book}{
   author={Bennett, Colin},
   author={Sharpley, Robert},
   title={Interpolation of operators},
   series={Pure and Applied Mathematics},
   volume={129},
   publisher={Academic Press, Inc., Boston, MA},
   date={1988},
   pages={xiv+469},
   isbn={0-12-088730-4},
   review={\MR{928802}},
}
\bib{GB}{article}{
   author={Bennett, Grahame},
   title={Factorizing the classical inequalities},
   journal={Mem. Amer. Math. Soc.},
   volume={120},
   date={1996},
   number={576},
   pages={viii+130},
   issn={0065-9266},
   review={\MR{1317938}},
   doi={10.1090/memo/0576},
}
\bib{BS11}{article}{
   author={Boza, Santiago},
   author={Soria, Javier},
   title={Solution to a conjecture on the norm of the Hardy operator minus
   the identity},
   journal={J. Funct. Anal.},
   volume={260},
   date={2011},
   number={4},
   pages={1020--1028},
   issn={0022-1236},
   review={\MR{2747011}},
   doi={10.1016/j.jfa.2010.11.013},
}
\bib{BS19}{article}{
   author={Boza, Santiago},
   author={Soria, Javier},
   title={Averaging operators on decreasing or positive functions:
   equivalence and optimal bounds},
   journal={J. Approx. Theory},
   volume={237},
   date={2019},
   pages={135--152},
   issn={0021-9045},
   review={\MR{3868629}},
   doi={10.1016/j.jat.2018.09.001},
}
	
\bib{BHS}{article}{
   author={Brown, Arlen},
   author={Halmos, P. R.},
   author={Shields, A. L.},
   title={Ces\`aro operators},
   journal={Acta Sci. Math. (Szeged)},
   volume={26},
   date={1965},
   pages={125--137},
   issn={0001-6969},
   review={\MR{187085}},
}
\bib{HLP}{book}{
   author={Hardy, G. H.},
   author={Littlewood, J. E.},
   author={P\'{o}lya, G.},
   title={Inequalities},
   series={Cambridge Mathematical Library},
   note={Reprint of the 1952 edition},
   publisher={Cambridge University Press, Cambridge},
   date={1988},
   pages={xii+324},
   isbn={0-521-35880-9},
   review={\MR{944909}},
}
\bib{J}{article}{
   author={Jameson, Graham J. O.},
   title={The $\ell_p$-norm of $C-I$, where $C$ is the Ces\`aro operator},
   journal={Math. Inequal. Appl.},
   volume={24},
   date={2021},
   number={2},
   pages={551--557},
   issn={1331-4343},
   review={\MR{4254470}},
   doi={10.7153/mia-2021-24-38},
}
\bib{S17}{article}{
   author={Strzelecki, Micha\l },
   title={The $L^p$-norms of the Beurling-Ahlfors transform on radial
   functions},
   journal={Ann. Acad. Sci. Fenn. Math.},
   volume={42},
   date={2017},
   number={1},
   pages={73--93},
   issn={1239-629X},
   review={\MR{3558516}},
   doi={10.5186/aasfm.2017.4204},
}
\bib{S20}{article}{
   author={Strzelecki, Micha\l },
   title={Hardy's operator minus identity and power weights},
   journal={J. Funct. Anal.},
   volume={279},
   date={2020},
   number={2},
   pages={108532, 34},
   issn={0022-1236},
   review={\MR{4088501}},
   doi={10.1016/j.jfa.2020.108532},
}
\bib{V}{article}{
   author={Volberg, A},
   title={Ahlfors–Beurling operator on radial functions},
   journal={arXiv:1203.2291},
   date={2012},
   pages={16},
}

\end{biblist}
\end{bibdiv}
\end{document}